\documentclass[sn-mathphys,Numbered]{sn-jnl}

\usepackage[french, english]{babel}
\usepackage{graphicx}%
\usepackage{multirow}%
\usepackage{amsmath,amssymb,amsfonts,amscd, amsbsy}%
\usepackage{amsthm}%
\usepackage{mathrsfs}%
\usepackage[title]{appendix}%
\usepackage{xcolor}%
\usepackage{textcomp}%
\usepackage{manyfoot}%
\usepackage{booktabs}%
\usepackage{algorithm}%
\usepackage{algorithmicx}%
\usepackage{algpseudocode}%
\usepackage{listings}%
\usepackage{enumerate}

\usepackage{ulem}



\theoremstyle{thmstyleone}%
%

\theoremstyle{thmstyletwo}%

\theoremstyle{thmstylethree}%

\raggedbottom


\newcommand*{\BF}[1]{\ifmmode\bm{#1}\else\textbf{#1}\fi}

\newcommand\R{{\mathbb R}}

\newcommand{\so}[1]{{\color{black} #1}}


\newtheorem{Theorem}{Theorem}
\newtheorem{Proposition}{Proposition}

\newtheorem{Lemma}{Lemma}

\newtheorem{Remark}{Remark}




\DeclareFontFamily{U}{BOONDOX-calo}{\skewchar\font=45 }
\DeclareFontShape{U}{BOONDOX-calo}{m}{n}{
  <-> s*[1.05] BOONDOX-r-calo}{}
\DeclareFontShape{U}{BOONDOX-calo}{b}{n}{
  <-> s*[1.05] BOONDOX-b-calo}{}
\DeclareMathAlphabet{\mathcalboondox}{U}{BOONDOX-calo}{m}{n}
\SetMathAlphabet{\mathcalboondox}{bold}{U}{BOONDOX-calo}{b}{n}
\DeclareMathAlphabet{\mathbcalboondox}{U}{BOONDOX-calo}{b}{n}
\newcommand{\mcb}[1]{{\mathcalboondox #1}}

\begin{document}

\title[]{Space-time fluctuations in a quasi-static limit}


\author*[1]{\fnm{C\'edric} \sur{Bernardin}}\email{cbernard@unice.fr}

\author[2]{\fnm{Patricia} \sur{Gon\c{c}alves}}\email{pgoncalves@tecnico.ulisboa.pt}
\equalcont{These authors contributed equally to this work.}

\author[3,4,5]{\fnm{Stefano} \sur{Olla}}\email{olla@ceremade.dauphine.fr}
\equalcont{These authors contributed equally to this work.}

\affil*[1]{\orgdiv{Faculty of Mathematics}, \orgname{National Research University Higher School of Economics},  \orgaddress{\street{6 Usacheva}, \city{Moscow}, \postcode{119048 }, \country{Russia}}}

\affil[2]{\orgdiv{Center for Mathematical Analysis,  Geometry and Dynamical Systems}, \orgname{Instituto Superior T\'ecnico, Universidade de Lisboa}, \orgaddress{\street{Av. Rovisco Pais}, \city{Lisboa}, \postcode{1049-001}, \country{Portugal}}}

\affil[3]{\orgdiv{CEREMADE}, \orgname{Universit\'e Paris Dauphine - PSL Research University}, \orgaddress{\street{Place du Mar\'echal De Lattre De Tassigny}, \city{Paris}, \postcode{75775 Cedex 16}, \country{France}}}

\affil[4]{\orgdiv{Institut Universitaire de France}, \orgaddress{\street{1 rue Descartes }, \city{Paris}, \postcode{75231 Cedex 05}, \country{France}}}

\affil[5]{\orgdiv{Gran Sasso Science Institute}, \orgaddress{\street{ Viale F. Crispi, 7}, \city{L'Aquila}, \postcode{67100}, \country{Italy}}}


\abstract{ We consider the macroscopic limit for the space-time density fluctuations in the open
  symmetric simple exclusion in the quasi-static scaling limit.
  We prove that the distribution of these fluctuations converge
  to a gaussian space-time field that is delta correlated in time
  but with long-range correlations in space.}

\keywords{Interacting particle systems, Transport processes, Long-range correlations}


\pacs[MSC Classification]{82C22,82C70,60K35}

\maketitle

\section{Introduction}
\label{intro}

Consider a stochastic dynamics  of interacting particles where the only conserved quantity
is given by the number of particles, in equilibrium with a reversible stationary (equilibrium) Gibbs
measure $\mu_\rho$, $\rho$ indicates here the average density of particles.
The fluctuation field of the density of particles at a given time $t$ is defined by averaging in space
on a macroscopic scale $N$.  
After a \emph{diffusive} rescaling of space and time the fluctuation field
in space of the density of particles converges
to a distribution ${\mcb Y}_t(u)$ expected to evolve in time
following the infinite dimensional Langevin equation (\cite{Spohn}, section II.2.9)
\begin{equation}
  \label{eq:20}
  \partial_t {\mcb Y}_t(u) = \mathcal D_\rho \partial_u^2  {\mcb Y}_t(u) -
  \sqrt{2\chi_\rho \mathcal D_\rho} \partial_u w(t,u),
\end{equation}
where $\mathcal D_\rho$ is the diffusivity at density $\rho$, $\chi_\rho$ is the static compressibility
and $w(t,u)$ is a standard space-time white noise. The equation \eqref{eq:20} shall be
considered in a weak sense and we wrote it in one dimension for simplicity.
The only stationary distribution for \eqref{eq:20} is given by the centered Gaussian
distribution on with covariance
\begin{equation}
  \label{eq:21}
  \mathbb E\left({\mcb Y}_t(u) {\mcb Y}_t(v)\right) =  \chi_\rho \delta(u-v),
\end{equation}
that correspond to the static fluctuation field of the stationary Gibbs measure,
that typically have exponential decay of space correlations on the microscopic scale.
The time correlations of the solution of  \eqref{eq:20} have exponential decay.

In this article we consider the \emph{space-time} fluctuations of the density of a open system 
on the space-time box: $ \{1,\ldots, N-1\} \times [0, N^{2+\alpha}T]$. When $\alpha>0$ as $N\to\infty$
this correspond to
a \emph{quasi-static} scaling limit, where the time scale is larger than the typical scale where the
dynamics relax to equilibrium (i.e. the diffusive time scale with $\alpha = 0$ \cite{DO-qs}).

We expect that the limit space-time field is a centered gaussian field $\mathbb Y(t,u)$ with covariance
\begin{equation}
  \label{eq:22}
   \mathbb E\left({\mathbb Y}(t,u) {\mathbb Y}(s,v) \right) =  \chi_\rho \delta(t-s) (-\partial_u^2)^{-1}(u,v),
 \end{equation}
 where $(-\partial_u^2)^{-1}(u,v)$ is the Green's function of the laplacian on $[0,1]$
 with \so{Dirichlet} boundary conditions.
While the delta-correlation in time is a natural consequence of the quasi-static time scale,
at first sight it may look surprising the long range correlation in space appearing in \eqref{eq:22}.

In the present article we prove such macroscopic behaviour for the open
symmetric simple exclusion on the discrete interval $\{1, \dots, N-1\}$
with boundary reservoirs, but we go beyond the equilibrium case (reservoirs at the same density $\rho$)
and we consider reservoirs at different densities ($\rho_-(t)$ on the left and $\rho_+(t)$ on the right)
that change in time on the macroscopic quasi-static scale, in the same situation as considered in
\cite{DO-qs}. When $\rho_\pm$ do not change in time, a stationary state exists that has a density profile
$\rho(u) = (\rho_+ - \rho_-)u + \rho_-$ and has long range
correlations.
The static fluctuation field in this stationary state  in the limit $N\to\infty$
has covariance \cite{Spohn_2,lmo}
\begin{equation}
  \label{eq:23}
  \mathbb E\left({\mcb Y}(u) {\mcb Y}(v)\right) =
  \chi_{\rho(u)} \delta(u-v) - \left(\rho_{+}- \rho_{-} \right)^2  (-\partial_u^2)^{-1}(u,v).
\end{equation}
where $\chi_\rho = \rho (1-\rho)$.
A similar limit is found for the density fluctation at macroscopic time $t$ in \cite{DO-qs} when $\rho_\pm(t)$
are time dependent. Then we prove in this situation that the space-time fluctuation
field converges to a centered gaussian field with covariance defined by \eqref{eq:VF} \eqref{eq:VFe}.

Even though long-range space correlations related to the Green function of the Dirichlet Laplacian
appears in \eqref{eq:22} (and its generalization \eqref{eq:VF}) as well as in \eqref{eq:23},
there is not a direct connection between the two results. Notice that in the covariance of the space-time
fluctuations \eqref{eq:22} the long range correlations appear also in equilibrium (i.e. $\rho_-= \rho_+$),
while in the space-fluctuations in the stationary state they appears only if $\rho_-\neq \rho_+$.
\so{Furthermore in \eqref{eq:22} there is no term with a delta correlation in space and the long range
  terms appear with a positive sign (while they are negative in \eqref{eq:23}).}

In Section \ref{sec:open-dynamics-with} we present the model  under investigation, we recall from \cite{DO-qs} the hydrodynamic limit but also estimates on correlation functions (which are crucial to achieve our results) and  we state our main theorem.
We carry out the complete proof  of our main theorem for the open SSEP in Section 3, while in Section 4 we sketch the extension to other \so{symmetric} stochastic dynamics like the zero-range model.

\so{We believe the results of the present article can be easily extended to multidimensional cases,
  for example on the open square of side length $N$. In the equilibrium case, with reservoirs at the boundaries with density $\rho$ we expect that the macroscopic space-time fluctuations are distributed
  like the gaussian field with covariance
\begin{equation}
  \label{eq:22d}
   \mathbb E\left({\mathbb Y}(t,u) {\mathbb Y}(s,v) \right) =  \chi_\rho \delta(t-s) (-\Delta)^{-1}(u,v),
 \end{equation}
 where $(-\delta^2)^{-1}(u,v)$ is the Green's function of the laplacian on $[0,1]^d$
 with {Dirichlet} boundary conditions.
 For time changing boundaries densities and in non-equilibrium, under reasonable conditions we expect the corresponding generalization of  \eqref{eq:VF} \eqref{eq:VFe} should hold.
}

\so{
  Another interesting generalization would be the investigazion of the space-time fluctuation for the
  asymmetric dynamics, like the ASEP where there exists results for the quasi-static limit \cite{DMLO}.
  }

\section{SSEP with boundary reservoirs}
\label{sec:open-dynamics-with}
\subsection{The model}
Let $\alpha>0$. We consider the symmetric simple exclusion process $\{\eta_{t} \; ; \;  t\ge 0\}$ on the interval $\{1, \dots, N-1\}$, with boundary creation/destruction of particles with rates $\rho_{-}(t)$ at $x=1$ and $\rho_{+}(t)$ at $x=N-1$. The generator of the time inhomogeneous pure jumps Markov process $\{\eta_{tN^{2+\alpha}} \; ; \;  t\ge 0\}$ with state space $\Omega^N=\{0,1\}^{\{1, \dots, N-1\}}$ is given by 
\begin{equation*}
{\mcb L}_{N,t} =N^{2+\alpha} \left({\mcb L}_N^0 +\mcb L_{N,t}^{\pm} \right)
\end{equation*}
where the generators above are acting on functions $f:\Omega^N \to \mathbb R$ according to
\begin{equation*}
(\mcb L_N^0 f )(\eta) =\sum_{x=1}^{N-2}\  \left[ f(\eta^{x,x+1}) -f(\eta) \right]
\end{equation*}
 and
 \begin{equation*}
   (\mcb L_{N,t}^{-} f )(\eta) =\left[1-\rho_- (t)\right] \eta (1)\  \left[ f(\eta^{1}) -f(\eta) \right]
   +\rho_- (t) (1-\eta (1))\ \left[ f(\eta^{1}) -f(\eta) \right] 
\end{equation*}
and
\begin{equation*}
(\mcb L_{N,t}^{+} f )(\eta) =\left[1-\rho_+ (t)\right] \eta (N-1)\  \left[ f(\eta^{N-1}) -f(\eta) \right]
  +\rho_+ (t) (1-\eta (N-1))\ \left[ f(\eta^{N-1}) -f(\eta) \right].
\end{equation*}
Above, $\eta^{x,y}$ is the configuration obtained from $\eta$ by exchanging the occupation variables at sites $x$ and $y$, while $\eta^x$ is the configuration obtained from $\eta$ by flipping the occupation variable at site $x$, that is:
\begin{equation*}
\eta^{x,y}(z)=\begin{cases}
\eta(z)& \text{ if } z\notin \{x,y\},\\
\eta(y) & \text{ if } z=x,\\ 
\eta(x) & \text{ if } z=y,
\end{cases}
\end{equation*} and 
\begin{equation*}
\eta^{x}(y)=\begin{cases}\eta(y) & \text{ if }y\neq x, \\ 1-\eta(x) & \text{ if } y=x. \end{cases}
\end{equation*}

\subsection{Hydrodynamic limit}
It is proved in \cite{DO-qs} the following Law of Large Numbers:
if {$\rho_0:[0,1]\to[0,1]$ is a measurable profile and}
at initial time the (deterministic or random) initial probability measure $\mu_N$ on $\Omega^N$ is such that 
\begin{equation}
  \label{eq:15-1}
  \begin{split}
  &\frac 1N \sum_{x=1}^{N-1} G\left(\cfrac xN\right) \eta_{0}(x) \
  \mathop{\longrightarrow}^{\rm{prob.}}_{N\to\infty} \ \int_0^1 G(u) \rho_0(u) \; du,
\end{split}
\end{equation}
for any continuous function $G:[0,1] \to \mathbb R$, then at any time $t\ge 0$ it holds
\begin{equation}
  \label{eq:15}
  \begin{split}
  &\frac 1N \sum_{x=1}^{N-1} G\left(\cfrac xN\right) \eta_{tN^{2+\alpha}}(x) \
  \mathop{\longrightarrow}^{\rm{prob.}}_{N\to\infty} \ \int_0^1 G(u) {\rho}_t(u)\; du,\\
  &\text{with\quad} \rho_t(u) = \left(\rho_{+}(t) - \rho_{-}(t)\right) u + \rho_{-}(t).
\end{split}
\end{equation}
Let $\mu_N$ be a probability measure on $\Omega^N$ and,  for $t \ge 0$,
we denote
\begin{equation}
\label{eq:barrhon}
\rho^N_t\left(x\right) := \mathbb E_{\mu_N} \left[ \eta_{tN^{2+\alpha}} (x)  \right], \quad x\in\{1,\ldots, N-1\},
\end{equation}
with $\rho^N_t\left(0\right) := \rho_-(t)$, and $\rho^N_t\left(N\right) := \rho_+(t)$. Define for any $u,v \in [0,1]$ and $s,t \ge 0$
\begin{equation}
\label{eq:barphin}
{\varphi}^N (u,v; s,t) :=
\mathbb E_{\mu_N}\left[\left(\eta_{sN^{2+\alpha}}([Nu]) - \rho^N_s ([Nu]) \right)
      \left(\eta_{tN^{2+\alpha}}([Nv]) - \rho^N_t ([Nv]) \right)\right].
\end{equation}
We assume that
\begin{equation}
  \label{eq:assumption}
  \begin{split}
   \sup_{N\in\mathbb N} \sup_{u,v \in [0,1], u\neq v} N  | {\varphi}^N (u,v; 0,0) | \lesssim 1
  \end{split}
\end{equation}
where here and in the following, $\lesssim$ denotes an inequality that is correct up to a multiplicative constant independent of $N$.

It is proved in Theorem 2.2 of  \cite{DO-qs} that, for $u\neq v$ and $u,v\not\in\{0,1\}$,
\begin{equation}
  \label{eq:16}
  \begin{split}
    &\lim_{N\to\infty} N  {\varphi}^N (u,v; t,t)  = - \left(\rho_{+}(t) - \rho_{-}(t)\right) u(1-v)
  \end{split}
\end{equation}
and for $t\neq s$
\begin{equation}
  \label{eq:19}
    \lim_{N\to\infty} N {\varphi}^N (u,v; s,t) = 0.
\end{equation}

We also note that from Lemma  4.3 in \cite{lmo} and from \eqref{eq:assumption} we get that
\begin{equation}
\label{eq:boundcorr}
\sup_{N\ge 1} \max_{x,y\in\{1,\cdots,N-1\},  x\neq y} \,  N \vert  {\varphi}^N \big( \tfrac xN , \tfrac yN ; t,t \big) \vert \lesssim 1.
\end{equation}
Above we used the proof of \cite{lmo} that can be adapted to our case, since our correlation  function   ${\varphi}^N (u,v; t,t)$ (see \eqref{eq:barphin} with $s=t$) also satisfies an equation similar to (4.5) in \cite{lmo}. By following exactly the same steps as in the proof of Lemma 4.3 of \cite{lmo} we conclude \eqref{eq:boundcorr}. We leave these details to the interested  reader. 
\subsection{Main result}
For a fixed $T>0$, we denote the open set $\Omega_T= (0,T)\times (0,1) \subset \R^2$.
  We define the (centered) space-time distribution valued random field
  $\mathbb Y^N$ acting on 
  $f\in \mathcal C^{\infty}\left(\Omega_T, \mathbb C\right)$ as
\begin{equation}\label{eq:Yqs}
  \mathbb Y^N (f) 
  =  \int_0^T \cfrac{1}{\sqrt {N^{1-\alpha}}} \sum_{x=1}^{N-1} {f}
  (t, \tfrac x N) \left(\eta_{t N^{2+\alpha}} (x) -
    {\rho^N_t(x)}\right) dt.
\end{equation}

Let $\mathbb Y$ be the space-time distribution valued {centered}
Gaussian field such that for any  function $f$
the variance $\mathbb V (f)$ of $\mathbb Y (f)$ is given by
\begin{equation}
  \label{eq:VF}
     \mathbb V (f) = 2 \int_0^T ds \int_0^1 du\  \rho_s(u) (1- \rho_s(u))
     \left\vert \partial_u F (s,u) \right\vert^2
\end{equation}
where $F$ is the solution of
\begin{equation}\label{eq:VFe}
\begin{cases}
&\left(\partial_u^2 F\right) (t,u) =f (t,u), \quad u \in [0,1],\\
&F(t,0)=F(t,1)=0,
\end{cases}
\end{equation}
{for all $t\in[0,T]$.}

Our main result is the following.
\bigskip

\begin{Theorem}
\label{th:mainthm1}
The sequence  $\{\mathbb Y^N \; ; \; N\ge 1\}$ converges in law,  as $N\to+\infty$, to $\mathbb Y$.
\end{Theorem}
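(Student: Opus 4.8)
Since the target field $\mathbb Y$ is Gaussian and entirely determined by the quadratic form $\mathbb V$, the plan is to establish convergence of the characteristic functionals: it suffices to prove that for every fixed $f\in\mathcal C^{\infty}(\Omega_T,\mathbb C)$ the real variable $\mathbb Y^N(f)$ converges in law to a centered Gaussian of variance $\mathbb V(f)$, since then $\mathbb E[e^{i\mathbb Y^N(g)}]\to e^{-\mathbb V(g)/2}$ for every $g$ (in particular for $g=\sum_j c_j f_j$), and the L\'evy continuity theorem on the dual of the nuclear Fr\'echet space $\mathcal C^{\infty}(\Omega_T)$ upgrades this to convergence in law of the field, the limit functional $f\mapsto e^{-\mathbb V(f)/2}$ being continuous. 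The whole difficulty is thus the one-dimensional central limit theorem, and the key idea is to convert the long time integral in \eqref{eq:Yqs}---problematic because of the super-diffusive time scale $N^{2+\alpha}$---into a martingale, exploiting the inversion of the Laplacian built into \eqref{eq:VFe}.

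To this end I set $\bar\eta_t(x):=\eta_{tN^{2+\alpha}}(x)-\rho^N_t(x)$; with the conventions $\rho^N_t(0)=\rho_-(t)$, $\rho^N_t(N)=\rho_+(t)$ one has $\bar\eta_t(0)=\bar\eta_t(N)=0$, and Dynkin's formula shows that $\bar\eta$ solves the discrete heat equation with homogeneous Dirichlet data, $d\bar\eta_t(x)=N^{2+\alpha}(\Delta_N\bar\eta_t)(x)\,dt+dM^x_t$ with $M^x$ a martingale and $\Delta_N$ the discrete Laplacian. Applying Dynkin's formula to the time-dependent linear functional $Z^N_t:=N^{-(1-\alpha)/2}\sum_x F(t,\tfrac xN)\bar\eta_t(x)$ and summing by parts twice---no boundary terms arise because both $F(t,\cdot)$ (by \eqref{eq:VFe}) and $\bar\eta_t$ vanish at the endpoints---turns the generator term $N^{2+\alpha}\sum_x F\,\Delta_N\bar\eta_t$ into $N^{2+\alpha}\sum_x(\Delta_N F)\bar\eta_t$, which by $\Delta_N F(t,\tfrac xN)=N^{-2}f(t,\tfrac xN)+O(N^{-4})$ equals $N^{\alpha}\mathbb Y^N(f)$ up to a small correction. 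Rearranging, I obtain
\begin{equation*}
\mathbb Y^N(f)=-\mathcal M^N_T+N^{-\alpha}\big(Z^N_T-Z^N_0\big)-N^{-\alpha}\mathbb Y^N(\partial_t F)-N^{-\alpha}\mathcal E^N_T,
\end{equation*}
with $\mathcal M^N_T:=N^{-(1+\alpha)/2}\mathscr M_T$ the suitably rescaled martingale and $\mathcal E^N_T$ the $O(N^{-4})$ Taylor remainder. Here $\alpha>0$ is decisive: a Cauchy--Schwarz bound in time together with the equal-time estimate \eqref{eq:boundcorr} yields the crude a priori bound $\mathrm{Var}(\mathbb Y^N(g))\lesssim N^{\alpha}$ for any smooth $g$, and likewise $\mathrm{Var}(Z^N_t)\lesssim N^{\alpha}$; the extra factor $N^{-\alpha}$ then drives the last three terms to $0$ in $L^2$, so that $\mathbb Y^N(f)=-\mathcal M^N_T+o_{L^2}(1)$.

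It remains to prove a central limit theorem for $\mathcal M^N$. Its predictable quadratic variation, computed from the carr\'e du champ, is
\begin{equation*}
\langle\mathcal M^N\rangle_T=\int_0^T\frac1N\sum_{x=1}^{N-2}\big(\partial_u F(t,\tfrac xN)\big)^2\big(\eta_{tN^{2+\alpha}}(x)-\eta_{tN^{2+\alpha}}(x+1)\big)^2\,dt+o(1),
\end{equation*}
the three scalings $N^{2+\alpha}$, $N^{-(1+\alpha)}$ and the $N^{-2}$ from the square of the discrete gradient of $F$ collapsing to a single $N^{-1}$, while the reservoir contributions are $O(N^{-1})$. Since $\mathbb E[(\eta(x)-\eta(x+1))^2]=\rho^N_t(x)+\rho^N_t(x+1)-2\rho^N_t(x)\rho^N_t(x+1)-2\varphi^N\to 2\rho_t(u)(1-\rho_t(u))$ by \eqref{eq:15} and \eqref{eq:boundcorr}, the mean of the right-hand side converges to $\mathbb V(f)$; and because $(\eta(x)-\eta(x+1))^2$ is bounded with correlations of order $N^{-1}$, its space--time average self-averages, $\mathrm{Var}(\langle\mathcal M^N\rangle_T)=O(N^{-1})$, giving $\langle\mathcal M^N\rangle_T\to\mathbb V(f)$ in probability.

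Finally the jumps of $\mathcal M^N$ are of size $O(N^{-(3+\alpha)/2})\to0$, so the Lindeberg condition holds and the martingale central limit theorem gives $\mathcal M^N_T\Rightarrow\mathcal N(0,\mathbb V(f))$; combined with $\mathbb Y^N(f)=-\mathcal M^N_T+o_{L^2}(1)$ this is the required one-dimensional statement, and the reduction of the first paragraph concludes. I expect the main obstacle to be the uniform $L^2$-control of the error terms in the displayed identity---above all the boundary-in-time field $N^{-\alpha}Z^N_T$ and the replacement $\Delta_N F\approx N^{-2}f$ near the reservoirs---for which the off-diagonal correlation bound \eqref{eq:boundcorr} is the indispensable input; by comparison the concentration of $\langle\mathcal M^N\rangle_T$, though it relies on the same estimate, is routine.
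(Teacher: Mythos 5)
Your strategy is essentially the paper's: the same Dynkin/martingale decomposition obtained by testing the field against a function whose Laplacian is $f$, identification of the limit variance $\mathbb V(f)$ through the quadratic variation of that martingale (your formula for $\langle\mathcal M^N\rangle_T$ matches \eqref{eq:0071}), and control of the time-boundary terms $N^{-\alpha/2}\mcb Y^N_0$, $N^{-\alpha/2}\mcb Y^N_T$ via \eqref{eq:assumption} and \eqref{eq:boundcorr}. Replacing tightness in a Sobolev space by L\'evy continuity on the dual of the nuclear space $\mathcal C^\infty(\Omega_T)$, and the exponential-martingale computation by the standard martingale CLT with vanishing jumps, are legitimate variants.

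There is, however, a genuine gap in your treatment of the discretization error. Because you test against the continuum solution $F$ of $\partial_u^2F=f$ rather than a discrete corrector, your identity carries the term $\mathbb Y^N(r_N)$ with $r_N=N^{2}\Delta_N F-f$ and $\|r_N\|_\infty=O(N^{-2})$ (this is your $N^{-\alpha}\mathcal E^N_T$). The only a priori estimate you invoke is the crude $\mathrm{Var}(\mathbb Y^N(g))\lesssim N^{\alpha}\|g\|_\infty^2$, which gives $N^{\alpha-4}$ for this term; your assertion that ``the extra factor $N^{-\alpha}$ drives the last three terms to $0$'' is unjustified here because $\mathcal E^N_T$ is not $O(1)$ in $L^2$. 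Since the theorem is claimed for every $\alpha>0$, the argument as written fails for $\alpha\ge 4$ (note that the paper's only $\alpha<4$ restriction occurs in the zero-range extension, for an unrelated reason). The paper eliminates the remainder entirely by solving the exact discrete backward problem \eqref{eq:Dirichlet problem} for $F^N$ (which simultaneously absorbs the $\partial_tF$ term) and then proving the energy estimate of Lemma \ref{lem:appr-num}, namely $F^N\to F$ in discrete $H^1$ together with $N^{-1}\sum_x|F^N(0,\tfrac xN)|^2=o(N^\alpha)$; with your route you would need either this corrector or an iteration of your decomposition applied to $r_N$ itself. A second, smaller issue: the convergence in probability of $\langle\mathcal M^N\rangle_T$ to $\mathbb V(f)$ is not ``routine'' from \eqref{eq:boundcorr} alone --- it is the quasi-static law of large numbers for the space--time average of the local function $(\eta(x)-\eta(x+1))^2$, which the paper imports as Theorem 3.1 of \cite{DO-qs}; you should cite or prove that input rather than deducing self-averaging from two-point equal-time bounds.
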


\bigskip

\begin{Remark}
 We have defined $\mathbb Y^N$ as an element of the dual of $\mathcal C^{\infty}\left(\Omega_T, \mathbb C\right)$. In the theory of distributions is usually denoted $\mathcal E \left(\Omega_T, \mathbb C\right) = \mathcal C^{\infty}\left(\Omega_T, \mathbb C\right)$ and $\mathcal D \left(\Omega_T, \mathbb C\right) = \mathcal C_0^{\infty}\left(\Omega_T, \mathbb C \right)$. The restriction from $\mathcal E' \left(\Omega_T, \mathbb C \right)$ to $\mathcal D' \left(\Omega_T, \mathbb C \right)$ is injective (cf. \cite{DK}, Chapter 8). In Subsection \ref{tight} we prove that $\{ \mathbb Y^N \; ; \; N\ge 1\}$ is tight in a Sobolev space $\mcb H_{-m,-k} \subset \mathcal E^\prime \left(\Omega_T, \mathbb C \right)$.
\end{Remark}

\section{Proof of Theorem \ref{th:mainthm1}}

The proof of Theorem \ref{th:mainthm1} is a consequence of two ingredients: convergence of finite dimensional distributions and tightness. 
Therefore,  in Proposition \ref{Prop:con-law} we establish
   the convergence in law of the sequence $\{\mathbb Y^N (f)\; ; \; N\ge 1\}$ to $\mathbb Y (f)$
   for any test function $f$ and  in Lemma \ref{lem:tightness}
   we prove  that the sequence $\{\mathbb Y^N \; ; \; N\ge 1\}$
   is tight in a suitable Sobolev space of distributions.

\subsection{Convergence of finite-dimensional distributions}

\begin{Proposition}
\label{Prop:con-law}
For any $\theta \in \mathbb R$ and for  any $f\in C^\infty(\Omega_T, \mathbb C)$ it holds that the sequence $\{\mathbb Y^N (f) \; ; \; N \ge 1\}$ converges in law to the centred complex Gaussian random variable with variance $\mathbb V (f)$ where $\mathbb V (f)$ is defined in \eqref{eq:VF}.
  \end{Proposition}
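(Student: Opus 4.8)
The plan is to reduce $\mathbb Y^N(f)$ to a single martingale, plus remainders that vanish in $L^2$, by a time-dependent Dynkin decomposition, and then to identify the Gaussian limit through the martingale central limit theorem. Write $\xi_t:=\eta_{tN^{2+\alpha}}$, a time-inhomogeneous Markov process whose instantaneous generator is $\mcb L_{N,t}$, and set $\bar\eta_t(x):=\eta(x)-\rho^N_t(x)$. The choice of corrector is dictated by the formal Poisson equation $\mcb L_{N,s}G_s=-(\text{integrand of }\mathbb Y^N(f))$: since $\mcb L_N^0+\mcb L_{N,s}^{\pm}$ acts on linear functionals of $\eta$ as the discrete Dirichlet Laplacian (the boundary generators supplying the values $\rho_\pm(s)$ at the ghost sites $0$ and $N$), I look for $G_s$ linear in $\eta$. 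Concretely, let $F$ solve \eqref{eq:VFe}, put $H_s(x):=-N^{-\alpha}F(s,\tfrac xN)$ (which vanishes at $x=0,N$ by the Dirichlet condition), and define $G_s(\eta):=(N^{1-\alpha})^{-1/2}\sum_{x=1}^{N-1}H_s(x)\,\bar\eta_s(x)$.

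First I would compute $(\partial_s+\mcb L_{N,s})G_s$. The key algebraic point is that $\rho^N_s$ itself solves the discrete heat equation $\partial_s\rho^N_s(x)=N^{2+\alpha}(\mcb L_N^0+\mcb L_{N,s}^{\pm})\rho^N_s(x)$; subtracting this from the action of $\mcb L_{N,s}$ on $\eta$ converts the inhomogeneous Laplacian into the homogeneous Dirichlet one acting on $\bar\eta$, so that after a discrete summation by parts (with no boundary contribution, since both $H_s$ and $\bar\eta_s$ vanish at $0$ and $N$) the drift becomes $(N^{1-\alpha})^{-1/2}\sum_x N^{2+\alpha}(\Delta_N H_s)(x)\,\bar\eta_s(x)+(N^{1-\alpha})^{-1/2}\sum_x \partial_s H_s(x)\,\bar\eta_s(x)$, with $\Delta_N$ the discrete Laplacian. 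Since $N^{2+\alpha}\Delta_N H_s=-f+O(N^{-2})$, the first sum equals $-\,(\text{integrand of }\mathbb Y^N(f))$ up to an error, and Dynkin's formula gives $\mathbb Y^N(f)=M^N_T+R_N$, where $M^N$ is the Dynkin martingale and $R_N$ gathers $G_T(\xi_T)-G_0(\xi_0)$, the $\partial_s H_s$ term, and the discretization error. Using \eqref{eq:boundcorr} to bound second moments (so that $\sum_{x,y}|\varphi^N|\lesssim N$), one finds $\|G_T(\xi_T)-G_0(\xi_0)\|_2=O(N^{-\alpha/2})$, the $\partial_s H_s$ term $=O(N^{-\alpha/2})$, and the discretization error $=O(N^{-2+\alpha/2})$; the prefactor $N^{-\alpha}$ in $H_s$, which encodes the super-diffusive time scale $\alpha>0$, is precisely what forces $R_N\to 0$.

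It then remains to apply the martingale CLT to $M^N$. Its predictable quadratic variation is $\langle M^N\rangle_T=\int_0^T\Gamma_s(\xi_s)\,ds$ with carr\'e du champ $\Gamma_s=\mcb L_{N,s}(G_s^2)-2G_s\,\mcb L_{N,s}G_s$; the bulk bonds contribute $\tfrac1N\sum_x(\partial_uF(s,\tfrac xN))^2(\xi_s(x)-\xi_s(x+1))^2$, while the two boundary flips contribute a term of order $N^{-1}$ (because $F$ vanishes at the endpoints), which is negligible. Taking expectations and using $\mathbb E[(\xi_s(x)-\xi_s(x+1))^2]=\rho^N_s(x)+\rho^N_s(x+1)-2\rho^N_s(x)\rho^N_s(x+1)-2\varphi^N(\tfrac xN,\tfrac{x+1}N;s,s)$ together with $\rho^N_s\to\rho_s$ and $\varphi^N=O(1/N)$ gives $\mathbb E[\langle M^N\rangle_T]\to 2\int_0^T\!\!\int_0^1\rho_s(u)(1-\rho_s(u))|\partial_uF(s,u)|^2\,du\,ds=\mathbb V(f)$. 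Since the jumps of $M^N$ are deterministically bounded by $O(N^{-3/2-\alpha/2})\to0$, the Lindeberg condition holds, so once the convergence $\langle M^N\rangle_T\to\mathbb V(f)$ is promoted to convergence in probability the CLT yields the Gaussian limit. For complex $f$ I would split $f=g+ih$ with $g,h$ real, prove joint convergence of $(M^{N,g},M^{N,h})$ by applying the same computation to every real linear combination (the role of the parameter $\theta$), and recombine; because $F=F_g+iF_h$ the total variance is $2\int_0^T\!\!\int_0^1\rho_s(1-\rho_s)(|\partial_uF_g|^2+|\partial_uF_h|^2)=\mathbb V(f)$.

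The main obstacle is exactly this last upgrade, namely showing $\mathrm{Var}(\langle M^N\rangle_T)\to0$ so that $\langle M^N\rangle_T\to\mathbb V(f)$ in probability (the limit being deterministic, convergence in expectation alone is not enough). This requires controlling the space-time four-point correlations $\mathbb E\big[(\xi_s(x)-\xi_s(x+1))^2(\xi_{s'}(y)-\xi_{s'}(y+1))^2\big]$, summed against the bounded weights $(\partial_uF)^2$ and integrated over $s,s'$. I expect to handle this through the product structure of the SSEP correlations (higher correlations factorize into two-point functions up to corrections small in $N$) combined with uniform-in-time versions of the bound \eqref{eq:boundcorr}: the diagonal and near-diagonal terms contribute $O(1/N)$ after division by $N^2$, which suffices. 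This concentration estimate, rather than the algebra of the decomposition, is where the real work lies.
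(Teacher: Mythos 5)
Your decomposition is essentially the paper's: a Dynkin corrector built from the Dirichlet problem $\partial_u^2 F=f$, a martingale whose carr\'e du champ produces $\tfrac1N\sum_x(\partial_u F)^2(\eta(x)-\eta(x+1))^2$, and remainders of order $N^{-\alpha/2}$ controlled by the equal-time correlation bounds \eqref{eq:assumption} and \eqref{eq:boundcorr}. Two deviations are worth recording. First, you take as corrector the continuous solution $F$ sampled on the lattice, which leaves a discretization error $\widetilde\Delta_N F-\partial_u^2F=O(N^{-2})$ inside the time-integrated field; since the best available bound from \eqref{eq:boundcorr} (via Cauchy--Schwarz in time) is $\mathbb E[|\mathbb Y^N(g)|^2]\lesssim N^{\alpha}\|g\|_\infty^2$, this remainder is only $O(N^{\alpha/2-2})$, exactly as you compute, and your argument therefore proves the statement only for $\alpha<4$, while the theorem is asserted for every $\alpha>0$. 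The paper avoids this by solving the \emph{discrete backward parabolic} problem $N^{-\alpha}\partial_tF^N+\widetilde\Delta_NF^N=f$ with terminal data $F(T,\cdot)$ (see \eqref{eq:Dirichlet problem}), which makes the identity \eqref{eq:007} exact with no discretization error and no leftover $\partial_s$ term, and then uses the $H^1$-approximation Lemma \ref{lem:appr-num} to replace $F^N$ by $F$ in the quadratic variation. Second, you close with the martingale CLT where the paper runs the exponential-martingale computation \eqref{eq:tatu}; both routes require exactly the same input, namely convergence in probability (resp.\ $L^2$) of the quadratic variation to the deterministic constant $\mathbb V(f)$, so this difference is cosmetic.

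That input is the one genuine gap. You correctly isolate it as the crux, but you do not prove it: you only state that you ``expect'' to control $\mathrm{Var}(\langle M^N\rangle_T)$ through factorization of the space-time four-point correlations of the SSEP together with uniform-in-time versions of \eqref{eq:boundcorr}, and neither estimate is established in your write-up (the required decay of four-point functions, uniformly over the quasi-static window and near the reservoirs, is not a triviality). The paper obtains this convergence, stated as \eqref{eq:0071}, by combining Lemma \ref{lem:appr-num} with Theorem 3.1 of \cite{DO-qs}, which is precisely a quasi-static replacement/local-equilibrium result giving the $L^2$ convergence of $\int_0^T\tfrac1N\sum_x(\partial_uF)^2(\eta_{sN^{2+\alpha}}(x)-\eta_{sN^{2+\alpha}}(x+1))^2\,ds$. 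Without that theorem or a worked-out correlation bound, your proposal is an architecture rather than a proof. Everything else --- the $O(N^{-\alpha/2})$ initial- and terminal-time terms, the negligible $O(N^{-1-\alpha})$ boundary-flip contribution to the quadratic variation, and the polarization argument reducing complex $f$ to real linear combinations --- is correct and matches the paper.
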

\begin{proof}
We claim first that it is sufficient to prove  that
\begin{equation}
\label{eq:14}
\lim_{N\to\infty} \mathbb E\left( \exp\left\{i\theta {\mathbb Y}^N (f)\right\}\right) = \exp\left\{ -\tfrac{\theta^2}{2} \mathbb V (f)   \right\}
\end{equation}
for real-valued functions.
\medskip
Indeed, if this is the case, then let $f \in C^\infty (\Omega_T, \mathbb C)$ be decomposed as $f=g+ i h$ where $g,h \in C^\infty (\Omega_T, \mathbb R)$. Therefore, for any real numbers $\lambda, \mu$, \eqref{eq:14} is valid for $\lambda g + \mu h$, i.e. the sequence of real random variables 
\begin{equation*}
\left\{ \mathbb Y^N (\lambda g + \mu h) = \lambda \mathbb Y^N (g) + \mu \mathbb Y^N (h) \; ; \; N\ge 1\right\}
\end{equation*}
converges to a Gaussian random variable with variance $\mathbb V ( \lambda g + \mu h)$. Let us define
\begin{equation}
\mathbb V (g,h)= 2 \int_0^T ds \int_0^1 du\  \rho_s(u) (1- \rho_s(u))
      \partial_u G (s,u) \ \partial_u H (s,u)
\end{equation}
where $G$ and $H$ are the solutions, respectively,  of
\begin{equation}
\begin{cases}
&\left(\partial_u^2 G\right) (t,u) =g (t,u), \quad u \in [0,1],\\
&G(t,0)=G(t,1)=0,
\end{cases}
\end{equation}
for all $t\in[0,T]$,
and 
\begin{equation}
\begin{cases}
&\left(\partial_u^2 H\right) (t,u) =h (t,u), \quad u \in [0,1],\\
&H(t,0)=H(t,1)=0,
\end{cases}
\end{equation}
for all $t\in[0,T]$. It is not difficult to check from the definition of $\mathbb V$ given in \eqref{eq:VF} that $\mathbb V (\lambda g + \mu h)= \lambda^2 \mathbb V (g) + \mu^2 \mathbb V (h) +2\lambda\mu \mathbb V (g,h)$. This implies that the sequence of random vectors 
\begin{equation*}
\left\{ ( \mathbb Y^N (g),  \mathbb Y^N (h) )\; ; \; N\ge 1\right\}
\end{equation*}
converges in law to a centred Gaussian random vector with covariance matrix
 \begin{equation*}
\begin{pmatrix}
\mathbb V (g) & \mathbb V (g, h)\\
\mathbb V (g,h)& \mathbb V(h)
\end{pmatrix} \ .
\end{equation*}
Hence the sequence $\{ \mathbb Y^N (f)=\mathbb Y^N (g) +i \mathbb Y^N (h)   \; ;\; N\ge 1\}$ converges in law to a complex centered Gaussian variable with variance  (observe that the solution of \eqref{eq:VFe} is $F=G+i H$)
\begin{equation*}
\mathbb V (g) +\mathbb V (h) =\mathbb V (f)
\end{equation*}
and this proves the claim.
\medskip
Now, it remains to prove \eqref{eq:14} for real-valued functions.

Note that from \eqref{eq:Yqs} we have 
\begin{equation*}
  \mathbb Y^N (f) = N^{\alpha/2} \int_0^T dt\,\, {\mcb Y}_t^N (f)
\end{equation*}
where the \emph{space} fluctuation field $\{ {\mcb {Y}}_t^N\; ; \;  t\ge 0\}$
is defined on a space-time function $f: [0,T]\times [0,1] \to \mathbb R$ as
\begin{equation*}
  {\mcb{ Y}}_t^N (f) := \cfrac{1}{\sqrt N} \sum_{x=1}^{N-1}
  {f}( t, \tfrac xN)\left(\eta_{t N^{2+\alpha}} (x) -
    {\rho^N_t(x)}\right).
\end{equation*}
With the convention that $\eta_t(0)=\rho_-(t) $ and $\eta_t(N)=\rho_+(t)$ for all $t\in[0,T]$, we observe that
\begin{equation*}
\begin{split}
&\mcb L_{N,t} (\eta_t (x)) = N^\alpha (\Delta_N \eta_t )(x), \quad x\in\{1, \ldots, N-1\},
\end{split}
\end{equation*}
where $(\Delta_N \eta_t )(x)=N^2\left(\eta_t(x+1)+\eta_t(x-1)-2\eta_t(x)\right)$.
In particular,
\begin{equation*}
\begin{split}
\partial_t \left[{\rho}^N_t (x)\right] &= N^\alpha ( \Delta_N {\rho}^N_t )(x), \quad  x\in\{1, \ldots, N-1\}.
\end{split}
\end{equation*}

Let $G: (t,u) \in [0,T]\times [0,1] \to \mathbb R$ be a differentiable function in time with Dirichlet boundary conditions, i.e., for any $t\in [0,T]$
\begin{equation*}
G(t,0)= G(t, 1)=0.
\end{equation*}

Since  $\Delta_N {\rho}^N_s (x) = 0$,
by Dynkin's formula it follows that the process $\{  {\mcb M}_t^N( G) \; ; \; t \ge 0\}$ defined by
\begin{equation}
\begin{split}
  {\mcb M}_t^N( G)&=\mcb{Y}^N_t( {{G}})-{\mcb{Y}}^N_0( {{ G}}) \\
  &-\int_0^t \cfrac{1}{\sqrt N} \sum_{x=0}^N
  {\partial_s G}( s, \tfrac xN) (\eta_{s N^{2+\alpha}} (x) -{ \rho}^N_s (x)) ds\\
  &- N^{\alpha -1/2}  \int_0^t \sum_{x=1}^{N-1} G( s, \tfrac xN) \left[ \Delta_N \left(\eta_{sN^{2+\alpha}} (x)
      - {\rho}^N_s (x) \right)\right] ds 
\end{split}
\end{equation}
is a martingale w.r.t. the natural filtration of $\{\eta_{tN^{2+\alpha}} \; ; \; t\ge 0\}$. By performing a  summation by parts  and noting that $G$ satisfies  the Dirichlet boundary conditions, we get 
\begin{equation*}
\begin{split}
  {\mcb M}_t^N( G)&=\mcb{Y}^N_t( {{G}})-{\mcb{Y}}^N_0( {{ G}}) -\int_0^t {\mcb Y}_s^N
  (\partial_s G + N^\alpha \widetilde\Delta_N G) ds,
\end{split}
\end{equation*}
where $\widetilde\Delta_N$ is defined by
\begin{equation*}
\begin{split}
  &(\widetilde\Delta_N G)(u) =N^2 \left[ G(u+\tfrac 1N) +G(u-\tfrac 1N) -2G(u)\right], \quad
  u\in\{\tfrac 1N, \ldots, \tfrac {N-1}N\}. 
\end{split}
\end{equation*}
It follows that
\begin{equation}
\label{eq:007}
\begin{split}
{\mathbb{ Y}}^N \left(\widetilde\Delta_N { G}+N^{-\alpha}{\partial_t { G}} \right)
&= \frac{1}{N^{\alpha/2}} \left\{ {\mcb{Y}}^N_T({G})-{\mcb{Y}}^N_0({G})\right\}
-\frac{1}{N^{\alpha/2}} {\mcb M}_T^N(G).
\end{split}
\end{equation}

\bigskip
Let now $f\in \mathcal C^\infty(\Omega_T,\mathbb R)$ and consider, for any $N\ge 1$, the function $F^N(t,\tfrac xN), x\in \{0, \ldots, N\}$, satisfying the finite-dimensional linear differential equation
\begin{equation}
\label{eq:Dirichlet problem}
\begin{cases}
&N^{-\alpha} \partial_t F^N(t,\tfrac xN) + \widetilde \Delta_N F^N (t,\tfrac xN)=f(t,\tfrac xN),  \quad x\in\{1,\cdots, N-1\},\quad  0<t<T,\\
&F^N(t, 0)= F^N (t, 1)=0,\\
&F^N (T,\cdot) =F(T,\cdot).
\end{cases}
\end{equation} 
The quadratic variation of the martingale $N^{-\alpha/2}{\mcb M}_T^N(F^N)$ satisfies
\begin{equation}
  \label{eq:0071}
  \begin{split}
  \mathbb V^N (f) := N \int_0^T \sum_{x=0}^{N-1} \Big(\eta_{sN^{2+\alpha}}(x)  - \eta_{sN^{2+\alpha}}(x+1) \Big)^2
  \left|F^N  \Big(s, \tfrac {x+1} N\Big) - F^N \Big(s, \tfrac x N\Big)  \right|^2\\
  \mathop{\longrightarrow}_{N\to\infty}^{\mathbb L^2}
{2 \int_0^Tds \int_0^1du\ \rho(s,u) (1- \rho(s,u))  \left\vert \partial_u F (s,u) \right\vert^2}
  = \mathbb V (f).
\end{split}
\end{equation}

To prove the previous convergence, we use \eqref{eq:H_1-app} proved below.
It is then sufficient to prove that
\begin{equation}
  \begin{split}
\int_0^T \frac{1}{N} \sum_{x=0}^{N-1} \Big(\eta_{sN^{2+\alpha}}(x) & - \eta_{sN^{2+\alpha}}(x+1) \Big)^2
  \left|(\partial_u F) (s, \tfrac {x} N)\right|^2 ds\\
  &   \mathop{\longrightarrow}_{N\to\infty}^{\mathbb L^2}  2\int_0^T \int_0^1\rho(s,u) (1- \rho(s,u))
    \vert (\partial_u F) (s,u) \vert^2    du  ds,
\end{split}
\end{equation}
but this is a simple consequence of Theorem 3.1 of \cite{DO-qs}.

\bigskip
Let us now prove \eqref{eq:14}. Recalling that $F^N (T, \cdot)= F(T, \cdot)$
we rewrite \eqref{eq:007} with $G$ replaced by $F^N$ as
\begin{equation*}
  \mathbb Y^N (f) =- N^{-\alpha/2} \left[ {\mcb{Y}}^N_0({ F^N}) - {\mcb{Y}}^N_T({ F})
    +\mcb M_T^N (F^N)\right].
\end{equation*}
We have then
\begin{equation}
\label{eq:tatu}
\begin{split}
&\mathbb E\Big[\exp\{i \theta \mathbb Y^N(f)\}\Big]\\
&=
\mathbb E\Big[\Big(\exp\Big\{-i\theta N^{-\alpha/2}
\left(\mcb Y_0^N (F^N) - {\mcb{Y}}^N_T({ F})\right)  \Big \} -1\Big) 
\exp\Big\{-i\theta N^{-\alpha/2} \mcb M_T^N (F^N)\Big\}\Big]\\
&+\mathbb E\Big[\exp\Big\{-i\theta N^{-\alpha/2} {\mcb M}_T^N (F^N)
+\frac{\theta^2}{2}\mathbb V^N (f)\Big\}\exp\Big\{-\frac{\theta^2}{2} \mathbb V^N (f) \Big\}\Big].
\end{split}
\end{equation}
The modulus of the first term on the right-hand side of the
previous display is bounded from above by
\begin{equation}
\label{eq:L2boundY}
\begin{split}
  &\Big|\mathbb E\Big[\Big(\exp\Big\{-i\theta N^{-\alpha/2}
 \left(\mcb Y_0^N (F^N) - {\mcb{Y}}^N_T({ F})\right)  \Big \} \Big\}-1\Big)
  \exp\Big\{-i\theta N^{-\alpha/2} \mcb  M_T^N (F^N)\Big\}\Big]\Big|\\
  &\leq  \mathbb E\Big[\Big|\exp\Big\{-i\theta N^{-\alpha/2}
  \left(\mcb Y_0^N (F^N) - {\mcb{Y}}^N_T({ F})\right)  \Big \} \Big\}-1\Big|\Big]\\
  &\lesssim { |\theta|} N^{-\alpha/2} \left(\mathbb E\Big[\Big| \mcb Y_0^N (F^N)\Big |\Big] +
   \mathbb E\Big[\Big| \mcb Y_T^N (F)\Big |\Big]\right)\\
&\leq  |\theta| N^{-\alpha/2} \sqrt{\mathbb E\Big[\Big|\mcb Y_0^N (F^N)\Big |^2\Big]}
+  |\theta| N^{-\alpha/2} \sqrt{\mathbb E\Big[\Big|\mcb Y_T^N (F)\Big |^2\Big]} \\
&\lesssim |\theta| N^{-\alpha/2} \left[ N^{-1}
 \sum_{x,y =1}^{N-1}
F^N \big( 0, \tfrac xN \big) F^N \big( 0, \tfrac yN \big) {\varphi}^N \big( \tfrac xN , \tfrac yN ; 0,0 \big)\right]^{1/2} \\&+ |\theta| N^{-\alpha/2} \left[ N^{-1} \sum_{x,y =1}^{N-1}
F^N \big( T, \tfrac xN \big) F^N \big( T, \tfrac yN \big) {\varphi}^N \big( \tfrac xN , \tfrac yN ; T,T \big)\right]^{1/2}
\end{split}
\end{equation}
where the space-time density correlation function $\varphi^N$ is defined in \eqref{eq:barphin}.
{Then by assumption on the initial condition, see \eqref{eq:assumption}, and
  by Lemma \ref{lem:appr-num}, we have that 
\begin{equation}
  \label{eq:18}
  \begin{split}
    &N^{-1} \sum_{x,y =1 }^{N-1} F^N \big( 0, \tfrac xN \big) F^N \big( 0, \tfrac yN \big)
    {\varphi}^N \big( \tfrac xN , \tfrac yN ; 0,0 \big) \\
    &\le N^{-2} \sum_{\substack{x,y=1\\x\neq y}}^{N-1} |F^N \big( 0, \tfrac xN \big)| | F^N \big( 0, \tfrac yN \big) |
    + N^{-1} \sum_{x=1 }^{N-1}
    F^N \big( 0, \tfrac xN \big)^2 {\varphi}^N \big( \tfrac xN , \tfrac xN ; 0,0 \big)\\
    &\le \left(N^{-1} \sum_{x=1}^{N-1} \left|F^N \big( 0, \tfrac xN \big)\right| \right)^2
    + C N^{-1} \sum_{x=1}^{N-1} F^N \big( 0, \tfrac xN \big)^2 = o(N^\alpha)
  \end{split}
\end{equation}
}
we get that the first term on the right-hand side of \eqref{eq:tatu} goes to $0$ as $N\to+\infty$. The term involving $F(T,\cdot)$ can be treated analogously  by noting that $F$ is bounded and by using \eqref{eq:boundcorr} with $t=T$.

Now let us focus on the remaining quantity in \eqref{eq:tatu}:
\begin{equation}\begin{split}
\mathbb E\Big[\exp\Big\{-i\theta N^{-\alpha/2} {\mcb M}_T^N (F^N)+\frac{\theta^2}{2}\mathbb V^N (f)\Big\}\exp\Big\{-\frac{\theta^2}{2} \mathbb V^N (f) \Big\}\Big].
\end{split}
\end{equation}
Last display  can be rewritten as
\begin{equation}\label{eq:disp1}
\begin{split}
 &\mathbb E\Big[\exp\Big\{-i\theta N^{-\alpha/2} {\mcb M}_T^N (F^N)+\frac{\theta^2}{2}\mathbb V^N (f)\Big\}\Big(\exp\Big\{-\frac{\theta^2}{2} \mathbb V^N (f)\Big\}- \exp\Big\{-\frac{\theta^2}{2} \mathbb V (f)\Big\}\Big)\Big]\\+& \mathbb E\Big[  \exp\Big\{-i\theta N^{-\alpha/2} {\mcb M}_T^N (F^N)+\frac{\theta^2}{2}\mathbb V^N (f)\Big\} \exp\Big\{\frac{-\theta^2}{2} \mathbb V (f)\Big\}\Big].
\end{split}
\end{equation}
Since on the second  term  of last display we have the exponential complex martingale 
$$ \exp\Big\{-i\theta N^{-\alpha/2} {\mcb M}_T^N (F^N)+\frac{\theta^2}{2}\mathbb V^N (f)\Big\},$$
which has constant expectation equal to one, the second term of \eqref{eq:disp1} is, in fact, equal to $$\exp\Big\{-\tfrac{\theta^2}{2} \mathbb V (f)\Big\}.$$ It remains  to estimate the first term of \eqref{eq:disp1} and show that it goes to zero as $N\to+\infty$. That term can be rewritten as 
\begin{equation}\begin{split}
 \mathbb E\Big[\exp\Big\{-i\theta N^{-\alpha/2} {\mcb M}_T^N (F^N)\Big\}\Big(1- \exp\Big\{\frac{\theta^2}{2}\left[ \mathbb V^N (f) -\mathbb V (f) \right]\Big\}\Big)\Big].
\end{split}
\end{equation}
From the inequality $|1-e^x|\leq |x|e^{|x|}$, plus the fact that $\mathbb V^N (f) \lesssim 1$, $\mathbb V (f) \lesssim 1$ and \eqref{eq:0071},  the proof ends.
\end{proof}

\begin{Lemma}
\label{lem:appr-num}
Let $f\in \mathcal C^\infty(\Omega_T)$ be real-valued. 
 Let $F(t,u)$ be the unique solution of the Laplace equation
\begin{equation}
\label{eq:Dirichlet problem final}
\begin{cases}
&\left(\partial_u^2 F\right) (t,u) =f (t,u), \quad u \in (0,1),\ 0<t<T,\\
&F(t,0)=F(t,1)=0.
\end{cases}
\end{equation}
For any $N\ge 1$, let $F^N(t,\tfrac xN)$,  $x\in \left\{0,  \ldots, N \right\}$,   be the solution of the equation
\begin{equation}
\label{eq:Dirichlet problem2}
\begin{cases}
  &N^{-\alpha} \partial_t F^N(t,\tfrac xN) + \widetilde\Delta_N F^N(t,\tfrac xN) =f(t,\tfrac xN),
   \quad x\in\{1,\ldots, N-1\}, \quad 0<t<T,\\
&F^N(t, 0)= F^N (t, 1)=0,\\
&F^N (T, \tfrac xN) = F(T, \tfrac xN).
\end{cases}
\end{equation}

Then
\begin{equation}
\label{eq:normf}
\cfrac{1}{N} \sum_{z=1}^{N-1} \vert F^N (0, \tfrac zN) \vert^2 = o( N^{\alpha})
\end{equation}
and
\begin{equation}
\label{eq:H_1-app}
\lim_{N\to +\infty} \int_0^T \cfrac{1}{N} \sum_{z=0}^{N-1}
\left|N \left(F^N  (t, \tfrac {z+1} N) - F^N (t, \tfrac z N)\right) -
  \partial_u F \left( t, \tfrac zN \right)  \right|^2 dt =0.
\end{equation}
\end{Lemma}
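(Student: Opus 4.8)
The plan is to compare $F^N$ with the solution of the associated \emph{discrete elliptic} (quasi-static) problem. For each fixed $t$ let $\Phi^N(t,\cdot)$ be the solution of $\widetilde\Delta_N\Phi^N(t,\tfrac xN)=f(t,\tfrac xN)$, $x\in\{1,\dots,N-1\}$, with $\Phi^N(t,0)=\Phi^N(t,1)=0$; this is the discrete analogue of \eqref{eq:Dirichlet problem final}. The heuristic is that, since $\alpha>0$, the factor $N^{-\alpha}$ in front of $\partial_t$ in \eqref{eq:Dirichlet problem2} makes the evolution relax, on the fast scale $N^\alpha$, to this profile, so that $R^N:=F^N-\Phi^N$ and its discrete gradient are negligible. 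Throughout I write $\langle u,v\rangle_N=\tfrac1N\sum_{x=1}^{N-1}u(\tfrac xN)v(\tfrac xN)$, $\|u\|_N^2=\langle u,u\rangle_N$ and $\nabla_N u(\tfrac xN)=N\big(u(\tfrac{x+1}N)-u(\tfrac xN)\big)$. The two tools used repeatedly are the discrete Poincaré inequality $\|u\|_N^2\le C_P\,\tfrac1N\sum_{x=0}^{N-1}|\nabla_N u(\tfrac xN)|^2$ for $u$ vanishing at the endpoints (with $C_P$ independent of $N$) and the summation-by-parts identity $\langle\widetilde\Delta_N u,v\rangle_N=-\tfrac1N\sum_{x=0}^{N-1}\nabla_N u(\tfrac xN)\,\nabla_N v(\tfrac xN)$, again for $u,v$ vanishing at $0$ and $1$.

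For \eqref{eq:normf} I argue directly, without $\Phi^N$. Testing \eqref{eq:Dirichlet problem2} against $F^N$ and using summation by parts gives the energy identity $\tfrac{N^{-\alpha}}2\partial_t\|F^N(t)\|_N^2=\langle f(t),F^N(t)\rangle_N+\tfrac1N\sum_x|\nabla_N F^N(t)|^2$. Reading this backward in time from the terminal datum $F^N(T,\cdot)=F(T,\cdot)$ (set $s=T-t$), the operator $\widetilde\Delta_N$ becomes dissipative; bounding the source by Cauchy--Schwarz and Young and absorbing half of the resulting $\|F^N\|_N^2$ into the Poincaré term, a Grönwall estimate yields $\sup_{t\in[0,T]}\|F^N(t)\|_N^2\le\|F(T,\cdot)\|_N^2+C_P^2\sup_t\|f(t)\|_N^2=O(1)$. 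In particular $\tfrac1N\sum_z|F^N(0,\tfrac zN)|^2=O(1)=o(N^\alpha)$ since $\alpha>0$, which is far stronger than \eqref{eq:normf}.

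For \eqref{eq:H_1-app} I split $\nabla_N F^N-\partial_u F=\nabla_N R^N+(\nabla_N\Phi^N-\partial_u F)$ and control each piece in $L^2(dt)$. The elliptic piece is consistency analysis: since $F$ is smooth up to the boundary, Taylor expansion gives $\widetilde\Delta_N F-\partial_u^2 F=O(N^{-2})$ and $\nabla_N F-\partial_u F=O(N^{-1})$ uniformly, and because $\widetilde\Delta_N(F-\Phi^N)=\widetilde\Delta_N F-f=O(N^{-2})$, the Poincaré estimate applied to the elliptic error gives $\tfrac1N\sum_x|\nabla_N(F-\Phi^N)|^2=O(N^{-4})$; hence $\int_0^T\tfrac1N\sum_z|\nabla_N\Phi^N-\partial_u F|^2\,dt=O(N^{-2})\to0$. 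For the dynamic piece, $R^N$ solves $N^{-\alpha}\partial_t R^N+\widetilde\Delta_N R^N=-N^{-\alpha}\partial_t\Phi^N$ with $R^N(T,\cdot)=F(T,\cdot)-\Phi^N(T,\cdot)$, and integrating its energy identity over $[0,T]$ isolates the gradient term:
\begin{equation*}
\int_0^T\tfrac1N\sum_x|\nabla_N R^N(t)|^2\,dt=\tfrac{N^{-\alpha}}2\big(\|R^N(T)\|_N^2-\|R^N(0)\|_N^2\big)+N^{-\alpha}\int_0^T\langle\partial_t\Phi^N,R^N\rangle_N\,dt.
\end{equation*}
Here $\|R^N(T)\|_N^2=O(N^{-4})$ (the elliptic error at $t=T$), $\|R^N(t)\|_N=O(1)$ uniformly (by the bound on $F^N$ above together with $\|\Phi^N\|_N\le C_P\|f\|_N$), and $\|\partial_t\Phi^N(t)\|_N\le C_P\|\partial_t f(t)\|_N=O(1)$ because $\partial_t\Phi^N$ solves the discrete Poisson problem with data $\partial_t f$. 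Thus the right-hand side is $O(N^{-\alpha})\to0$, and combining the two pieces proves \eqref{eq:H_1-app}.

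The delicate points are elliptic and discrete rather than probabilistic. The main one is the uniform-in-time consistency and stability of the discrete Poisson operator: one needs $F$ and its $u$-derivatives up to fourth order, as well as $\partial_t F$, to be bounded up to the spatial boundary (which requires the regularity of the test function $f$ to propagate to the closure of $\Omega_T$), and the discrete Poincaré constant $C_P$ to be independent of $N$. Granting these standard facts, the prefactor $N^{-\alpha}$ does the essential work: it is exactly what suppresses both the terminal-layer contribution $\|R^N(T)\|_N^2$ and the residual forcing $\partial_t\Phi^N$, making the quasi-static approximation $F^N\approx\Phi^N\approx F$ exact in the limit.
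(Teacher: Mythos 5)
Your proof is correct, and it reaches the same two conclusions by a recognizably different technical route. Both you and the paper run a discrete energy (summation-by-parts) estimate on an error equation for the backward-in-time discrete parabolic problem, exploiting the terminal condition and the sign of $\widetilde\Delta_N$; the difference lies in which error you estimate and how you control the source. The paper compares $F^N$ directly with $F$, so the source in the error equation is the consistency error $\varepsilon^N=N^{-\alpha}\partial_t F-(\partial_u^2-\widetilde\Delta_N)F=O(N^{-\alpha}+N^{-2})$, and it absorbs this source into the gradient term by a second summation by parts against the discrete antiderivative $E^N$ of $\varepsilon^N$ followed by Cauchy--Schwarz; this avoids any Poincar\'e inequality or a priori bound on $\|F^N\|_N$, but only yields the qualitative statements $o(N^\alpha)$ and $o(1)$. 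You instead interpolate through the discrete elliptic solution $\Phi^N$, use the discrete Poincar\'e inequality both for the elliptic consistency step and for the damping/Gr\"onwall bound $\sup_t\|F^N(t)\|_N=O(1)$, and obtain quantitative rates: $O(1)$ in place of $o(N^\alpha)$ for \eqref{eq:normf} and $O(N^{-\alpha})+O(N^{-2})$ for \eqref{eq:H_1-app}. The only caveats are shared with the paper: you need $f$ (hence $F$ and $\partial_t F$, $\partial_u^4F$) bounded up to the closure of $\Omega_T$ for the $O(N^{-1})$ and $O(N^{-2})$ Taylor consistency bounds, and you should note (as your differential inequality $y'\le N^\alpha(A-By)$ in the reversed time variable indeed gives) that the $N^\alpha$ prefactor multiplies both the forcing and the damping, so the uniform bound comes from $y\le\max(y(0),A/B)$ rather than from a naive Gr\"onwall integration, which would only give $O(N^\alpha)$.
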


\begin{proof}
Notice that $F$ can be  explicitly computed and it is given by 
\begin{equation*}
F(t,u)=\int_0^u \left( \int_0^v f(t,w) dw \right) dv -u \left(\int_0^1 \left( \int_0^v f(t,w) dw \right) dv\right).
\end{equation*}
Let $G^N= {F}^N-{F}$ and $\varepsilon^N
=N^{-\alpha} \partial_t F - (\partial_u^2 -\widetilde\Delta_N) F$.
Then $G^N$ satisfies
\begin{equation}
\label{eq:Dirichlet problem6}
\begin{cases}
&N^{-\alpha} \partial_t G^N (t,\tfrac xN)+ \widetilde\Delta_N G^N(t,\tfrac xN) =-\varepsilon^N (t,\tfrac xN), \quad x\in\{1,\ldots, N-1\}, \quad 0< t < T,\\
&G^N(t, 0)= G^N (t, 1)=0,\\
&G^N (T,\cdot) =0.
\end{cases}
\end{equation} 
Multiply the first line of last display  by $G_N$ and
sum over $ x\in\{1,\cdots, N-1\}$.
Since $G^N$ satisfies Dirichlet boundary conditions, we have that 
\begin{equation*}\begin{split}
\sum_{x=1}^{N-1} G^N(t,\tfrac xN) \widetilde\Delta_N G^N(t,\tfrac xN) 
&=-N^2 \sum_{x=0}^{N-1}\Big(G^N(t,\tfrac xN) -G^N(t,\tfrac {x+1}{N})\Big )^2.
\end{split}\end{equation*}
Performing a summation by parts and integrating in time between $0$ and $T$, we get 
\begin{equation}
\label{eq:2897}
\begin{split}
 \cfrac{1}{2N^\alpha} \ \cfrac{1}{N} \sum_{x=1}^{N-1} \left[G^N \left( 0, \tfrac{x}{N} \right) \right]^2 +&\int_0^T \cfrac{1}{N} \sum_{x=0}^{N-1} \left[ N\left( G^N \left( t, \tfrac{x+1}{N} \right) - G^N \left( t, \tfrac{x}{N} \right) \right) \right]^2\ dt \\
&= \int_0^T  \cfrac1N \sum_{x=1}^{N-1}  \varepsilon^N \Big( t, \tfrac xN \Big) G^N \left( t, \tfrac{x}{N} \right)  \ dt .
\end{split} 
\end{equation}
Now let  $E^N:[0,T]\times \left\{0, \tfrac 1N, \ldots, 1 \right\} \to \mathbb R$
be the function defined by 
\begin{equation*}
E^N \left(t, \tfrac xN \right) =\cfrac 1N \sum_{y=0}^x \varepsilon^N \left( t, \tfrac xN\right).
\end{equation*}
Performing a  summation  by parts, we have that 
\begin{equation}\begin{split}
\cfrac1N \sum_{x=1}^{N-1}  \varepsilon^N \Big( t, \tfrac xN \Big) G^N \left( t, \tfrac{x}{N} \right) 
&= - \cfrac1N \sum_{x=0}^{N-1}  E^N \Big( t, \tfrac xN \Big) \left[ N \left( G^N \left( t, \tfrac{x+1}{N} \right)-G^N \left( t, \tfrac{x}{N} \right)\right)\right].\end{split}
\end{equation}
Above we used again the fact that $G^N$ satisfies Dirichlet boundary conditions. 
Since $F$ is smooth, we have for any $x\in\{0,\cdots,N-1\}$, that 
\begin{equation*}
\vert \varepsilon^N \Big( t, \tfrac xN \Big) \vert \lesssim N^{-\alpha}+ N^{-2},
\end{equation*}
and consequently,
\begin{equation*}
\lim_{N \to \infty} \int_0^T  \cfrac1N \sum_{x=0}^{N-1}  \left[ E^N \Big( t, \tfrac xN \Big) \right]^2 dt =0.
\end{equation*}
Hence by Cauchy-Schwarz inequality and \eqref{eq:2897} we conclude that 
\begin{equation*}
\lim_{N\to \infty}  \int_0^T \cfrac{1}{N} \sum_{x=0}^{N-1} \left[ N\left( G^N \left( t, \tfrac{x+1}{N} \right) - G^N \left( t, \tfrac{x}{N} \right) \right) \right]^2dt  =0.
\end{equation*}
Since $F$ is smooth this implies \eqref{eq:H_1-app}.
Using this information in \eqref{eq:2897} 
we get that
\begin{equation*}
  \lim_{N \to \infty}  \cfrac{1}{N^\alpha} \ \cfrac{1}{N}
  \sum_{x=1}^{N-1} \left[G^N \left( 0, \tfrac{x}{N} \right) \right]^2 = 0,
\end{equation*}
and since $F$ is bounded, we have
\begin{equation*}
   \lim_{N \to \infty}  \cfrac{1}{N^\alpha} \ \cfrac{1}{N} \sum_{x=1}^{N-1} \left[F^N \left( 0, \tfrac{x}{N} \right) \right]^2 =0.
\end{equation*}
\end{proof}

\subsection{Tightness}\label{tight}

We recall that $T>0$ is some fixed finite time horizon. The sequence $\{\psi_{n,z} \; ; \; n\in {\mathbb Z}, z\in\mathbb N^*\}$\footnote{$\mathbb N^*:=\mathbb{N}\setminus\{0\}$} of functions on $[0,T]\times [0,1]$, defined by 
\begin{equation}
\label{eq:phinz}
\psi_{n,z}(t,u)=  \sqrt 2\exp\left( 2 i \pi n \tfrac tT\right)\sin\Big( \pi zu\Big), \quad t\in [0,T],\quad  u \in [0,1],
\end{equation}
forms an orthonormal basis of ${L}^2 ([0,T]\times [0,1])$ and for any $f \in {L}^2 ([0,T]\times [0,1])$ we denote $\hat f(n,z) =  \langle f,\psi_{n,z}\rangle$ the Fourier coefficients of $f$ in this basis. Above $\langle\cdot ,\cdot \rangle$ is the inner product in
${L}^2 ([0,T]\times [0,1])$. For $m,k \in \R$, let $\mcb H_{m,k}$ be the Hilbert space obtained as the completion of $C^\infty(\Omega_T, \mathbb C)$ endowed with the inner product{\footnote{Here, $w^*$ denotes
      the complex conjugate of the complex number $w$.}} 
\begin{equation*}
  \langle f ,g \rangle_{m,k}=\sum_{\substack{n\in\mathbb Z\\ z\in \mathbb N^*}}
  (n^2+1)^{m} z^{2k}  \hat f(n,z) \hat g^*(n,z).
\end{equation*}
 The corresponding norm is denoted by $\Vert\cdot \Vert_{m, k}$.
Then for $m,k\ge 0$ we have
$$
\mcb H_{-m,-k} \supset  L^2 ([0,T]\times [0,1]) \supset \mcb H_{m,k}
$$
and $\mcb H_{-m,-k}$ can be identified with the dual  of \,$\mcb H_{m,k}$
with respect to the inner product of $L^2([0,T]\times [0,1])$. The inner product in  $\mcb H_{-m,-k}$ is given by
\begin{equation}\label{eq:norm_dual}
\langle F ,G \rangle_{-m,-k}=\sum_{\substack{n\in\mathbb Z\\ z\in \mathbb N^*}} (n^2+1)^{-m} z^{-2k}  \hat F(n,z) \hat G^*(n,z).
\end{equation}
and we denote  the corresponding norm by 
$\Vert\cdot\Vert_{-m,-k}$.

\begin{Lemma}
\label{lem:tightness}
For  $m, k>1/2$,  the sequence $\{{\mathbb Y}^N\; ; \; N\ge 1\}$ is tight 
  in $\mcb H_{-m,-k}$.
\end{Lemma}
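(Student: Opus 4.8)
The plan is to deduce tightness from a uniform bound on the second moments of the coordinates of $\mathbb Y^N$ in the orthonormal basis \eqref{eq:phinz} — these coordinates being exactly the quantities $\mathbb Y^N(\psi_{n,z})$ appearing in \eqref{eq:norm_dual} — combined with the compactness of the Sobolev embeddings. The central estimate I would establish is
\[
C_0:=\sup_{N\ge1}\ \sup_{n\in\mathbb Z,\,z\in\mathbb N^*}\ \mathbb E\big[\,|\mathbb Y^N(\psi_{n,z})|^2\,\big]<\infty .
\]
Granting this, fix target exponents $m,k>1/2$ and choose auxiliary exponents $p\in(1/2,m)$ and $q\in(1/2,k)$. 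Then, by \eqref{eq:norm_dual},
\[
\sup_{N\ge1}\mathbb E\big[\|\mathbb Y^N\|_{-p,-q}^2\big]\le C_0\sum_{n\in\mathbb Z,\,z\in\mathbb N^*}(n^2+1)^{-p}z^{-2q}<\infty ,
\]
the series being summable exactly because $p,q>1/2$; this is the origin of the threshold in the statement, and it shows in particular that each $\mathbb Y^N$ is a genuine $\mcb H_{-p,-q}$-valued (hence $\mcb H_{-m,-k}$-valued) random variable. Since the diagonal inclusion $\mcb H_{-p,-q}\hookrightarrow\mcb H_{-m,-k}$ has symbol $(n^2+1)^{-(m-p)}z^{-2(k-q)}\to0$, it is a compact operator, so closed balls of $\mcb H_{-p,-q}$ are compact in $\mcb H_{-m,-k}$. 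Tightness then follows from Markov's inequality, $\mathbb P(\|\mathbb Y^N\|_{-p,-q}>R)\le C_0R^{-2}$ uniformly in $N$, so that for every $\varepsilon>0$ a sufficiently large such ball captures all the $\mathbb Y^N$ with probability at least $1-\varepsilon$.

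The heart of the matter is therefore the per-mode bound $C_0<\infty$, which I would obtain by re-running the martingale computation of the proof of Proposition~\ref{Prop:con-law} with the smooth test function $f=\psi_{n,z}$. Each $\psi_{n,z}$ vanishes at $u=0,1$, so the continuum and discrete Dirichlet solutions $F=F_{n,z}$ and $F^N=F^N_{n,z}$ of \eqref{eq:Dirichlet problem final}--\eqref{eq:Dirichlet problem2} are well defined, and the representation derived there reads
\[
\mathbb Y^N(\psi_{n,z})=-N^{-\alpha/2}\Big[\mcb Y^N_0(F^N)-\mcb Y^N_T(F)+\mcb M^N_T(F^N)\Big].
\]
I would bound the three resulting contributions separately. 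For the martingale, the quadratic variation \eqref{eq:0071} together with $(\eta(x)-\eta(x+1))^2\le1$ gives
\[
N^{-\alpha}\,\mathbb E\big[|\mcb M^N_T(F^N)|^2\big]=\mathbb E\big[\mathbb V^N(\psi_{n,z})\big]\le\int_0^T\frac1N\sum_{x=0}^{N-1}\Big|N\big(F^N(t,\tfrac{x+1}N)-F^N(t,\tfrac xN)\big)\Big|^2 dt .
\]
For the two boundary terms, expanding through $\varphi^N$ and arguing exactly as in \eqref{eq:L2boundY}--\eqref{eq:18} with the correlation bounds \eqref{eq:assumption} and \eqref{eq:boundcorr} and Cauchy--Schwarz, I would get $N^{-\alpha}\mathbb E[|\mcb Y^N_0(F^N)|^2]\lesssim N^{-\alpha}\tfrac1N\sum_x|F^N(0,\tfrac xN)|^2$ and $N^{-\alpha}\mathbb E[|\mcb Y^N_T(F)|^2]\lesssim N^{-\alpha}\tfrac1N\sum_x|F(T,\tfrac xN)|^2$, the latter being $\lesssim N^{-\alpha}$ since $F_{n,z}=-(\pi z)^{-2}\psi_{n,z}$ is bounded uniformly in $n,z$.

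Everything then reduces to a single a priori energy estimate for $F^N$, and this is the step I expect to demand the most care, because it must be made uniform in $n$ and $z$ — in contrast with Lemma~\ref{lem:appr-num}, whose conclusion $\tfrac1N\sum_x|F^N(0,\tfrac xN)|^2=o(N^\alpha)$ is only qualitative and mode-dependent. Multiplying the first line of \eqref{eq:Dirichlet problem2} by $\tfrac1N\overline{F^N}$, summing over $x$, taking real parts, integrating in time and using the Dirichlet boundary conditions as in the derivation of \eqref{eq:2897}, I would obtain
\begin{equation*}
\begin{split}
\frac{1}{2N^\alpha}\frac1N\sum_{x=1}^{N-1}\big|F^N(0,\tfrac xN)\big|^2
&+\int_0^T\frac1N\sum_{x=0}^{N-1}\Big|N\big(F^N(t,\tfrac{x+1}N)-F^N(t,\tfrac xN)\big)\Big|^2 dt\\
&=\frac{1}{2N^\alpha}\frac1N\sum_{x=1}^{N-1}\big|F(T,\tfrac xN)\big|^2-\mathrm{Re}\int_0^T\frac1N\sum_{x=1}^{N-1}\psi_{n,z}(t,\tfrac xN)\,\overline{F^N(t,\tfrac xN)}\,dt .
\end{split}
\end{equation*}
Bounding the source term by Cauchy--Schwarz and the discrete Poincar\'e inequality (valid since $F^N$ satisfies Dirichlet conditions) and absorbing the gradient into the left-hand side, I would deduce
\[
\int_0^T\frac1N\sum_{x=0}^{N-1}\Big|N\big(F^N(t,\tfrac{x+1}N)-F^N(t,\tfrac xN)\big)\Big|^2 dt+N^{-\alpha}\frac1N\sum_{x=1}^{N-1}\big|F^N(0,\tfrac xN)\big|^2\lesssim \|\psi_{n,z}\|_{L^2}^2+N^{-\alpha}\frac1N\sum_{x=1}^{N-1}\big|F(T,\tfrac xN)\big|^2 .
\]
The crucial observation is that $\|\psi_{n,z}\|_{L^2([0,T]\times[0,1])}$ and $\tfrac1N\sum_x|F_{n,z}(T,\tfrac xN)|^2$ are bounded by constants independent of $n,z,N$; hence the right-hand side is $O(1)$, all three contributions to $\mathbb E[|\mathbb Y^N(\psi_{n,z})|^2]$ are uniformly bounded, and $C_0<\infty$ follows. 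The only genuine obstacle is to secure this uniformity while correctly tracking the $N^{-\alpha}$ prefactors; once it is in hand, the abstract compactness argument of the first paragraph completes the proof.
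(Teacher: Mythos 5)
Your argument is correct, and it follows the paper's skeleton --- expand the $\mcb H_{-m,-k}$ norm in the basis $\{\psi_{n,z}\}$, bound each $\mathbb E[|\mathbb Y^N(\psi_{n,z})|^2]$ through the martingale identity \eqref{eq:007}, the quadratic variation \eqref{eq:0071} and the correlation bounds \eqref{eq:assumption}, \eqref{eq:boundcorr}, then sum over modes --- but the key step is carried out by a genuinely different device. The paper never introduces the corrector $F^N$ in this proof: it uses that $\psi_{n,z}$ is an exact eigenfunction of $\widetilde\Delta_N+N^{-\alpha}\partial_t$ with eigenvalue $2i\pi \tfrac nT N^{-\alpha}-4N^2\sin^2\left(\tfrac{\pi z}{2N}\right)$, which yields the explicit mode-dependent bound \eqref{eq:10}; the price is the subsequent summation of the series, handled by writing $z=2Np+q$ and a case analysis for $q\le N$ and $q>N$. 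You instead solve the discrete terminal-value problem \eqref{eq:Dirichlet problem2} with source $\psi_{n,z}$ and derive a bound \emph{uniform} in $(n,z,N)$ from the discrete energy identity plus the discrete Poincar\'e inequality, after which the series $\sum_{n,z}(n^2+1)^{-p}z^{-2q}$ with $p,q>1/2$ is trivially summable. Your route is more robust (it would survive, e.g., variable coefficients, where no explicit eigenfunctions are available) at the cost of the uniformity issue, which you correctly isolate and resolve; note only that for $z$ a multiple of $2N$ the function $\psi_{n,z}$ vanishes at every lattice point, so $\mathbb Y^N(\psi_{n,z})=0$ and these degenerate modes cause no harm. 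Finally, your concluding step --- Markov's inequality in an intermediate space $\mcb H_{-p,-q}$ combined with the compactness of the embedding $\mcb H_{-p,-q}\hookrightarrow\mcb H_{-m,-k}$ --- is in fact more careful than the paper's, which applies Chebyshev directly to $\|\mathbb Y^N\|_{-m,-k}$ even though bounded balls of $\mcb H_{-m,-k}$ are not compact in that space; the paper's conclusion is rescued precisely by the observation you make, since its moment bound holds for all exponents larger than $1/2$.
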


\begin{proof}
We need to show that 
$$
\lim_{A\to+\infty}\limsup_{N\to+\infty}\mathbb P_{\mu_N} (\|{\mathbb Y}^N\|_{-m,-k}>A)=0.
$$
Recall from \eqref{eq:norm_dual} the definition of the $\Vert\cdot\Vert_{-m,-k}$-norm. From Chebychev's inequality the proof ends as long as we show that
\begin{equation}
\sup_{N\ge 1} \mathbb E_{\mu_N} (\|{\mathbb Y}^N\|_{-m,-k}^2) =\sup_{N\ge 1}
\sum_{\substack{n\in\mathbb Z\\ z\in \mathbb N^*}} (n^2+1)^{-m} z^{-2k} \,
\mathbb E_{\mu_N} \left( \vert{\mathbb Y}^N (\psi_{n,z})\vert^2 \right) <\infty.
\label{eq:11}
\end{equation}
Observe that from \eqref{eq:phinz} we have 
\begin{equation*}
 \widetilde \Delta_N \psi_{n,z} (t, \tfrac xN) + N^{-\alpha} \partial_t \psi_{n,z}(t, \tfrac xN) 
  = \left[ 2 i \pi \tfrac nT N^{-\alpha} - 4N^2
    \sin^2 \left(\tfrac {\pi z}{2N}\right)\right]\psi_{n,z}(t, \tfrac xN) . 
\end{equation*}
Applying \eqref{eq:007} with $G= \psi_{n,z}$ we get
\begin{equation}
  \begin{split}
    &{\mathbb Y}^N (\psi_{n,z}) =
    \cfrac{1}{2 i \pi \tfrac nT N^{-\alpha} - 4N^2\sin^2 \left(\tfrac {\pi z}{2N}\right)}
   {\mathbb Y}^N \left( {\widetilde\Delta_N}\psi_{n,z} + N^{-\alpha} \partial_t \psi_{n,z}\right)  \\
    &=   \cfrac{1}{2 i \pi \tfrac nT N^{-\alpha} - 4N^2\sin^2 \left(\tfrac {\pi z}{2N}\right)}
    \left[ \frac{1}{N^{\alpha/2}} \left\{ {\mcb{Y}}^N_T (\psi_{n,z})-
        {\mcb{Y}}^N_0(\psi_{n,z})\right\} -\frac{1}{N^{\alpha/2}} {\mcb M}_T^N(\psi_{n,z})
     \right].
\end{split}
\end{equation}
From \eqref{eq:0071}, we see that 
the quadratic variation of the martingale $\frac{1}{N^{\alpha/2}} {\mcb M}_T^N(\psi_{n,z})$ is given by 
\begin{equation*}
8 N \sin^{2} \left( \tfrac{\pi z}{N} \right)\cos^2\Big(\tfrac{\pi zx}{N}+\tfrac{\pi z}{2N}\Big) \int_0^T \sum_{x=0}^{N-1} \Big(\eta_{tN^{2+\alpha}}(x)  - \eta_{tN^{2+\alpha}}(x+1) \Big)^2 dt \lesssim N^2 \sin^{2} \left( \tfrac{\pi z}{N} \right).
\end{equation*}
Recalling \eqref{eq:boundcorr} and  following similar arguments as in \eqref{eq:18}, we conclude that
\begin{equation}
  \mathbb E \left[ \left\{ {\mcb{Y}}^N_T (\psi_{n,z})-{\mcb{Y}}^N_0(\psi_{n,z})\right\}^2 \right]
  \le 2\mathbb E \left[ |{\mcb{Y}}^N_0(\psi_{n,z})|^2 \right] +2\mathbb E \left[ |{\mcb{Y}}^N_T(\psi_{n,z})|^2 \right] \lesssim 1.
\end{equation}
Collecting the previous bounds together we get that
\begin{equation}
  \label{eq:10}
  \mathbb E \left( \vert{\mathbb Y}^N (\psi_{n,z})\vert^2 \right) \lesssim
   \frac{N^2\sin^2\left(\tfrac{\pi z}{N}\right) + N^{-\alpha}}{n^2 N^{-2\alpha} +N^4 \sin^4 \left(\tfrac{\pi z}{2N}\right)} .
\end{equation}
At this point we need to estimate
\begin{equation*}
\sum_{\substack{n\in\mathbb Z\\ z\in \mathbb N^*}} \frac{1}{(n^2+1)^{m}}\frac{1}{z^{2k}}  \frac{N^2\sin^2\left(\tfrac{\pi z}{N}\right) + N^{-\alpha}}{n^2 N^{-2\alpha} +N^4 \sin^4 \left(\tfrac{\pi z}{2N}\right)} .
\end{equation*}
By symmetry, we split the sum in two cases depending on whether $n=0$ or $n\neq 0$: 
\begin{equation*}
S_1:=\sum_{z\in \mathbb N^*}  \frac{1}{z^{2k}} \frac{N^2\sin^2\left(\tfrac{\pi z}{N}\right) + N^{-\alpha}}{N^4 \sin^4 \left(\tfrac{\pi z}{2N}\right)} .
\end{equation*}
and 
\begin{equation*}
S_2:=2\sum_{{n,z\in \mathbb N^*}} \frac{1}{(n^2+1)^{m}}\frac{1}{z^{2k}}   \frac{N^2\sin^2\left(\tfrac{\pi z}{N}\right) + N^{-\alpha}}{n^2 N^{-2\alpha} +N^4 \sin^4 \left(\tfrac{\pi z}{2N}\right)} .
\end{equation*}
First we observe that for $m>1/2$ we have that
\begin{equation*}
S_2\lesssim \sum_{{n\in \mathbb N^*}} \frac{1}{(n^2+1)^{m}}\sum_{{z\in \mathbb N^*}}\frac{1}{z^{2k}}   \frac{N^2\sin^2\left(\tfrac{\pi z}{N}\right) + N^{-\alpha}}{N^4 \sin^4 \left(\tfrac{\pi z}{2N}\right)}\lesssim S_1,
\end{equation*}
therefore it is enough to estimate $S_1$. 
Note that by writing $z=2Np+q$ and using the periodicity of the function $\sin(\cdot)$ we have 
\begin{equation*}
S_1:=\sum_{p=0}^{+\infty}\sum_{q=1}^{2N-1}\frac{1}{(2Np+q)^{2k}} \frac{N^2\sin^2\left(\tfrac{\pi q}{N}\right) + N^{-\alpha}}{N^4 \sin^4 \left(\tfrac{\pi q}{2N}\right)} .
\end{equation*}
Now we split last sum into 
\begin{equation*}
S^A_{1}:=\sum_{p=0}^{+\infty}\sum_{q=1}^{N}\frac{1}{(2Np+q)^{2k}} \frac{N^2\sin^2\left(\tfrac{\pi q}{N}\right) + N^{-\alpha}}{N^4 \sin^4 \left(\tfrac{\pi q}{2N}\right)} .
\end{equation*}
\begin{equation*}
S_1^B:=\sum_{p=0}^{+\infty}\sum_{q=N+1}^{2N-1}\frac{1}{(2Np+q)^{2k}} \frac{N^2\sin^2\left(\tfrac{\pi q}{N}\right) + N^{-\alpha}}{N^4 \sin^4 \left(\tfrac{\pi q}{2N}\right)} .
\end{equation*}
We start with $S_1^B$. We do the change of variables $r=q-N$ and use a trignometric identity, to get 
\begin{equation*}
S_1^B=\sum_{p=0}^{+\infty}\sum_{r=1}^{N-1}\frac{1}{(2Np+r+N)^{2k}} \frac{N^2\sin^2\left(\tfrac{\pi r}{N}\right) + N^{-\alpha}}{N^4 \cos^4 \left(\tfrac{\pi r}{2N}\right)} .
\end{equation*}
Now we use the fact that for $x\in[0,\tfrac{\pi}{2}]$ it holds that $\cos(x)\geq x-\tfrac \pi2$, to get the bound
\begin{equation*}
S_1^B\lesssim \sum_{p=0}^{+\infty}\sum_{r=1}^{N-1}\frac{1}{(2Np+r+N)^{2k}} \frac{N^2\sin^2\left(\tfrac{\pi r}{N}\right) + N^{-\alpha}}{(r-N)^4} .
\end{equation*}
From the change of variables $v=r-N$ and using again the periodicity of the function $\sin(\cdot)$, we can bound the last display from above by
\begin{equation*}
\begin{split}
S_1^B\lesssim \sum_{p=0}^{+\infty}\sum_{v=-N+1}^{-1}\frac{1}{(2N(p+1)+v)^{2k}} \frac{N^2\sin^2\left(\tfrac{\pi v}{N}\right) + N^{-\alpha}}{v^4}
\end{split}
\end{equation*}
Now, we bound last display from above by  a constant times
$$ \sum_{p=0}^{+\infty}\sum_{v=1}^{N-1}\frac{1}{(2Np+1)^{2k}} \frac{v^2+ N^{-\alpha}}{v^4} \lesssim
1+\sum_{p=1}^{+\infty}\frac{1}{(2Np)^{2k}}<+\infty$$
where we used the fact that $k>1/2$. 
Finally note that
\begin{equation*}
S^A_{1}\lesssim \sum_{p=0}^{+\infty}\sum_{q=1}^{N}\frac{1}{(2Np+q)^{2k}} \frac{q^2 + N^{-\alpha}}{q^4} \lesssim 1+ \sum_{p=1}^{+\infty}\sum_{q=1}^{N}\frac{1}{(2Np)^{2k}} \frac{q^2 + N^{-\alpha}}{q^4}<+\infty.
\end{equation*}
This ends the proof.

\end{proof}

\section{Generalization and discussion}

Theorem \ref{th:mainthm1} can be adapted without difficulties to the case of periodic boundary conditions at equilibrium, i.e. by considering the dynamics evolving on the discrete torus (without reservoirs) $\mathbb T_N=\{0,1,\ldots, N-1\}$ starting from its equilibrium state. Since the mass of the system  is conserved  we have to restrict the space of test functions $f$ that satisfy $\int_\mathbb T f(t,u)du=0$ for any $t\in[0,T]$. Here $~\mathbb T$ denotes the one-dimensional torus.    In this case the sequence $\{\mathbb Y^N\; ; \; N \ge 1\}$ converges to a field $\mathbb Y$ white in time and correlated in space according to the kernel
{$\rho(1-\rho)(-\partial_u^2)^{-1}$, where $\rho$ is the limit average density.}

In fact, in the case of periodic conditions, at equilibrium, Theorem  \ref{th:mainthm1} can be proved similarly for various interacting particle systems: the Kipnis-Marchioro-Presutti model \cite{KMP}, the symmetric Ginzburg-Landau dynamics with harmonic potential \cite{GPV,Y},  the Symmetric Inclusion Process \cite{GRV},  independent random walks  (i.e. the Symmetric Zero-Range Process with linear rate) and Generalized Exclusion Process \cite{FGS}.

\bigskip

In the sequel we see briefly how to extend our main theorem in the case of generic symmetric Zero-Range processes at equilibrium. Our result is however limited to a time scale $t N^{2+\alpha}$ with $\alpha<4$. More precisely we consider a Zero-Range process  $\{\eta_{tN^{2+\alpha}} \; ; \; t\ge 0\}$ with state space $\Omega^N={\mathbb N}^{\mathbb T_N}$ with generator $\mcb L $ acting on a test function $f:\Omega^N \to \mathbb R$ as
\begin{equation*}
(\mcb L f )(\eta) =\sum_{\substack{x,y\in \mathbb T_N\\ |x-y|=1}}\  g (\eta(x)) \left[ f(\eta^{x,y}) -f(\eta) \right].
\end{equation*}
Here $g:\mathbb N \to \mathbb N$ has to satisfy some technical assumptions listed in \cite{GJS} in order to assure the validity of the second order Boltzmann-Gibbs bound \eqref{eq:sobg}. We refer the reader to \cite{GJS} for details. We denote by $\nu_\rho$ the equilibrium probability measure with density $\rho$. Let us consider the dynamics at equilibrium under $\nu_\rho$.

We define
\begin{equation*}
  \mathbb Y^N (f) = N^{\alpha/2} \int_0^T {\mcb Y}_t^N (f) dt
\end{equation*}
where the random fluctuation field $\{ {\mcb {Y}}_t^N\; ; \;  t\ge 0\}$
acts on a space-time function $f: [0,T]\times \mathbb T \to \mathbb R$ satisfying 
$\int_\mathbb T f(t,u)du=0$ for any $t\in[0,T]$, as
\begin{equation*}
  {\mcb { Y}}_t^N (f) := \cfrac{1}{\sqrt N} \sum_{x\in \mathbb T_N}
  {f}( t, \tfrac xN) (\eta_{t N^{2+\alpha}} (x) -\rho).
\end{equation*}
Let
\begin{equation*}
V_g (\eta (x)) = g(\eta (x)) - {\tilde g} (\rho) - {\tilde g}^{\prime} (\rho) (\eta (x) -\rho). 
\end{equation*}
By Dynkin's formula it follows that the process $\{  {\mcb M}_t^N( F) \; ; \; t \ge 0\}$ defined by
\begin{equation}
\begin{split}
  {\mcb M}_t^N( F)&=\mcb{Y}^N_t( {{F}})-{\mcb {Y}}^N_0( {{ F}}) \\
  &-\int_0^t \cfrac{1}{\sqrt N} \sum_{x \in \mathbb T_N}
  {\partial_s F}( s, \tfrac xN) (\eta_{s N^{2+\alpha}} (x) - \rho) ds\\
  &- N^{\alpha -1/2}  \int_0^t \sum_{x\in \mathbb T_N} {\tilde g}^{\prime} (\rho) (\widetilde\Delta_N F)( s, \tfrac xN)
  \left[ \eta_{sN^{2+\alpha}} (x) -\rho \right] ds \\
  &- N^{\alpha -1/2}  \int_0^t \sum_{x\in {\mathbb T_N}} (\widetilde\Delta_N F)( s, \tfrac xN)
  V_g \left(\eta_{sN^{2+\alpha}} (x)  \right) ds 
  \end{split}
\end{equation}
is a martingale w.r.t. the natural filtration of $\{\eta_{tN^{2+\alpha}} \; ; \; t\ge 0\}$. 

Let $\mathbb T=[0,1)$ be the unit torus. Consider now $f: (t,u) \in [0,T]\times \mathbb T \to f(t,u) \in \mathbb R$ be a  function satisfying $\int_\mathbb T f(t,u)du=0$ for any $t\in[0,T]$ and consider, for any $N\ge 1$, the function $F^N: (t,u) \in [0,T]\times \tfrac{1}{N} {\mathbb T}_N \to F^N(t,u) \in \mathbb R$  satisfying  the equation
\begin{equation}
\label{eq:Dirichlet problemZR0}
\begin{split}
&N^{-\alpha} \partial_t F^N(t,\tfrac xN) + {\tilde g}^\prime (\rho) \widetilde\Delta_N F^N(t,\tfrac xN) =f(t,\tfrac xN),\quad x\in\{1,\ldots, N-1\},\\
&F^N (T,\cdot) =0.
\end{split}
\end{equation}
We have then 
\begin{equation}
\label{eq:007ZR}
\begin{split}
{\mathbb{ Y}}^N \left(f \right)
&=- \frac{1}{N^{\alpha/2}}{\mcb{Y}}^N_0({ F^N}) -\frac{1}{N^{\alpha/2}} {\mathcal M}_T^N(F^N)\\
&+ N^{(\alpha-1)/2}  \int_0^T \sum_{x\in {\mathbb T_N}} (\widetilde\Delta_N F^N)( s, \tfrac xN) V_g \left(\eta_{sN^{2+\alpha}} (x)  \right) ds 
\end{split}
\end{equation}
By adapting  Remark 11 in \cite{GJ0} we have that for any $\ell \ge 1$
\begin{equation}
\label{eq:sobg}\begin{split}
&\mathbb E \left[ \left(  \int_0^T \sum_{x\in {\mathbb T_N}} (\widetilde\Delta_N F^N)( s, \tfrac xN)
    V_g \left(\eta_{sN^{2+\alpha}} (x)  \right) ds \right)^2\right] \\&\lesssim
\left[ \frac{\ell}{N^{2+\alpha}} + \frac{T}{\ell}  \right] \
\int_0^T \sum_{x\in {\mathbb T_N}} \left[(\widetilde\Delta_N F^N) \left(s, \tfrac xN\right) \right]^2 ds.\end{split}
\end{equation}
By taking $\ell =N^{1+\alpha/2}$ and using \eqref{eq:Dirichlet problemZR0} we get 
\begin{equation}
\begin{split}
  &\mathbb E \left[ \left(  \int_0^T \sum_{x\in {\mathbb T_N}} (\widetilde\Delta_N F^N)( s, \tfrac xN)
      V_g \left(\eta_{sN^{2+\alpha}} (x)  \right) ds \right)^2\right] \\
  & \lesssim N^{-(1+\alpha/2)}  \ \int_0^T \sum_{x\in {\mathbb T_N}} \left[(\widetilde\Delta_N F^N) \left(s, \tfrac xN\right)
  \right]^2 ds\\
  &\lesssim N^{-(1+\alpha/2)}  \left\{  \int_0^T \sum_{x\in {\mathbb T_N}} \left[f \left(s, \tfrac xN\right) \right]^2 ds
    + N^{-2\alpha}  \int_0^T \sum_{x\in {\mathbb T_N}} \left[\partial_s F^N \left(s, \tfrac xN\right) \right]^2 ds \right\}.
\end{split}
\end{equation}

Hence the square of the $L^2$-norm of the third term on the right-hand side of \eqref{eq:007ZR} is bounded above by $N^{\alpha/2-2}$. It goes to $0$ if, and only if, $\alpha<4$. This ends the proof.
 \\

\backmatter

\bmhead{Acknowledgments}

P.G. thanks  FCT/Portugal for financial support through the projects UIDB/04459/2020 and UIDP/04459/2020.  This project has received funding from the European Research Council (ERC) under  the European Union's Horizon 2020 research and innovative programme (grant agreement   n. 715734).

\section*{Declarations}


\begin{itemize}
\item Funding: UIDB/04459/2020, UIDP/04459/2020, ERC n. 715734.
\item Conflict of interest/Competing interests: The authors declare that there are no conflict of interest.
\item Availability of data and materials: Not applicable.
\item Code availability: Not applicable 
\end{itemize}
%

\bibliographystyle{amsalpha}

\end{document}